\newcommand{\ignore}[1]{}
\newtheorem{prelem}{{\bf Theorem}}
\newtheorem{theorem}{Theorem}
\newtheorem{corollary}[theorem]{Corollary}
\newtheorem{definition}[theorem]{Definition}
\newtheorem{proposition}[theorem]{Proposition}
\newtheorem{remarka}[theorem]{Remark}
\newtheorem{examplea}[theorem]{Example}
\newtheorem{exercisea}[theorem]{Exercise}
\def\span{{\mathrm{span}}}
\def\wsat{{\mathrm{wsat}}}
\def\codim{{\mathrm{co\mbox{-}dim}}}
\providecommand{\keywords}[1]{\textbf{\textit{Keywords:}} #1}
\def\b{}
\date{}
\begin{document}

\title{\b{Lower bounds for graph bootstrap percolation via properties of polynomials}}

\author{Lianna Hambardzumyan \thanks{School of Computer Science, McGill University. \texttt{lianna.hambardzumyan@mail.mcgill.ca}} \and Hamed Hatami \thanks{School of Computer Science, McGill University. \texttt{hatami@cs.mcgill.ca}. Supported by an NSERC grant.} \and Yingjie Qian \thanks{Department of Mathematics and Statistics, McGill University. \texttt{yingjie.qian@mail.mcgill.ca}}}

\maketitle 

\begin{abstract}
We introduce a simple method for proving lower bounds for the size  of the smallest percolating set in a certain graph bootstrap process.  We apply this method to \b{obtain recursive formulas for} the sizes of the smallest percolating sets in multidimensional tori and multidimensional grids (in particular hypercubes). The former  answers a question of   Morrison and Noel~\cite{MorrisonNoel}, and the latter provides an alternative and simpler proof for one of  their main results. 
\end{abstract}

\keywords{Bootstrap, Percolation, graph bootstrap.}

\section{Introduction}
Graph bootstrap processes  arise  naturally in statistical mechanics, probability theory, combinatorics, and social sciences, and thus have been extensively studied  in the  past four decades or so. In these processes, one starts with an initial set of infected vertices (\b{or} sites) or edges (\b{or} bonds) in a graph, and at every step, the infection spreads to a new vertex or edge according to some local rule. The goal is often to understand the properties of the \emph{percolating} sets, i.e. the initial sets of infected vertices or edges for which the infection eventually spreads to all the vertices or edges.

The most commonly studied notion of bootstrap percolation is the \emph{$r$-neighbour bootstrap percolation}, introduced in~\cite{Chalupa} in the context of disordered magnetic systems in statistical mechanics. In this process, one starts with an initial set of infected vertices, and at every step, the infection spreads to the vertices that have at least $r$ infected neighbours. While the main focus of the research that is motivated by problems in statistical physics has been on determining the critical threshold at which a random initial infected set percolates, fundamental extremal problems such as determining the size of the smallest percolating sets have  been investigated extensively as well. Indeed this problem is often closely related to the problem of determining the critical percolation threshold~\cite{MR2915649,MR2726074,MR2925568,MR2214907}. We will denote the size of the smallest percolating set in a graph $G$ in the $r$-neighbour bootstrap percolation process by $m(G,r)$.

In this paper, we are interested in a closely related bootstrap process, which we refer to as the \emph{$r$-bond bootstrap percolation}.  In this process, we start with a set of infected \emph{edges}, and at every step, the infection spreads to a new edge if at least one of its endpoints is incident to at least $r$ infected edges. In other words, once a vertex is incident to $r$ infected edges, then the infection spreads to all of the edges that are incident to that vertex. We denote the size of the smallest percolating set for this process by $m_e(G,r)$. This natural process seems to have been introduced first in~\cite{imbibition} for the two dimensional grid to model how a wetting fluid fills the ducts in the network of a porous media.   

The  $r$-bond bootstrap percolation  is  an instance of the graph bootstrap process defined in 1968 by Bollob\'as~\cite{Bollobas68}. Given graphs $G$ and $H$,  and an initial set of infected edges, in the $H$-bootstrap process, at each time step, we infect an edge $e$ if it completes a new infected copy of $H$ in $G$. Note that taking $H$ to be the star with $r+1$ leaves (denoted by $S_{r+1}$), results in the above-mentioned process.  The size of the smallest percolating set of edges in $G$ in the $H$-percolation process is called the weak saturation number of $H$ in $G$ and is denoted by $\wsat(G,H)$.  Hence in our notation $m_e(G,r) = \wsat(G,S_{r+1})$.

Note that one can turn a percolating set of vertices for the $r$-neighbour bootstrap process to a percolating set of edges for the $r$-bond bootstrap process by infecting $r$ arbitrarily chosen edges incident to every initially infected vertex  (if the degree of the vertex is less than $r$, then we just infect all the edges incident to it). Similarly, given a percolating set of edges for the $r$-bond bootstrap process, to obtain a percolating set for the $r$-neighbour bootstrap process, one can pick one endpoint of every infected edge; these vertices together with all the vertices of degree less than $r$ form  a percolating set of vertices for the $r$-neighbour bootstrap process. These observations show
\begin{equation}
\label{eq:simple_lowerbd}
\frac{m_e(G,r)}{r} \le m(G,r) \le m_e(G,r)+|\{v:\deg_G(v) < r\}|.
\end{equation}
Recently Morrison and Noel~\cite{MorrisonNoel}  used \eqref{eq:simple_lowerbd} to determine the asymptotics of $m(Q_d,r)$, where $Q_d$ denotes the $d$-dimensional hypercube. Indeed, they proved the exact formula 
\begin{equation}
\label{eq:exact_formula}
m_e(Q_d,r)= \sum_{j=1}^{r} {d-j-1 \choose r-j} j 2^{j-1},
\end{equation}
and combined it with \eqref{eq:simple_lowerbd} to show that $m(Q_d,r) = \frac{d^{r-1}}{r!} + \Theta_{d \to \infty}(d^{r-2})$, settling a conjecture of~\cite{MR2214907}. Prior to~\cite{MorrisonNoel}, the best known lower bounds for $m(Q_d,r)$  were only linear in $d$. 

%

The purpose of this article is to introduce a  simple \b{method} based on \b{the fact that a non-zero polynomial of degree $r$ has at most $r$ roots} for proving lower bounds for $m_e(G,r)$. We will use this method to settle a problem of Morrison and Noel~\cite{MorrisonNoel} by determining $m_e(G,r)$ for the multidimensional tori \b{(Theorem~\ref{thm:tori})}. Moreover,  we provide an alternative and simpler proof  for the case of the hypercube, and more generally, the multidimensional  grid \b{(Theorem~\ref{thm:grid})}, which were originally established in~\cite{MorrisonNoel}.

\b{Applying this property of polynomials to percolation seems very natural, and indeed  as an anonymous referee pointed  out to us,    Balister et al.~\cite{Balister2018LineP} have used  it recently to determine the size of the minimal percolating set of $d$-dimensional integer grid in a slightly different context of the line percolation model.  }

\subsection{Notation}
For a positive integer $n$, we denote $[n]=\{1,\ldots,n\}$. For a graph $G=(V,E)$, and an edge colouring $c:E \to \mathbb{R}$, to simplify the notation we often denote the colour of an edge $e$ by $c_e$. An edge colouring is called \emph{proper} if it assigns different colours to incident edges. For a logical statement $P$, we define $1_{[P]}$ to be $1$ if $P$ is true, and $0$ if $P$ is false. 

The Cartesian product of two graphs $G$ and $H$, denoted by $G \square H$, is the graph with vertex set $V(G) \times V(H)$, in which two vertices $(u_1,v_1)$ and $(u_2,v_2)$ are adjacent if and only if either $u_1=u_2$ and $v_1v_2 \in E(H)$, or $v_1=v_2$ and $u_1u_2 \in E(G)$. In other words, for every vertex $v \in V(H)$, we have a copy $G_v$ of $G$,  induced on vertices $\{(u,v): u \in V(G)\}$, and for every \b{edge} $v_1v_2 \in E(H)$, there is a  matching between $G_{v_1}$ and $G_{v_2}$ that connects each vertex of $G_{v_1}$ to its corresponding vertex in $G_{v_2}$.

\section{Polynomials and Bootstrap Percolation}
We start with the following key definition. 


\begin{definition}
\label{def:Wspace}

Let \b{$r \ge 0$}  be an  integer,  $G=(V,E)$ be a graph, and let  $c:E \to \mathbb{R}$ be a proper edge colouring of $G$. Let $W_{G,c}^r$ be the vector space of all vectors $(p_v \in \mathbb{R}[x]: v \in V)$, where $p_v$'s are univariate polynomials such that
 \begin{enumerate}
 \item \b{ $\deg(p_v)\le  \min \{r, \deg(v)\} - 1$ (here we use the convention that the degree of the zero polynomial is   $-1$);}
 \item  $p_{u}(c_{uv})=p_{v}(c_{uv})$ for every edge $uv \in E$.
\end{enumerate}

\end{definition}

Note that $W^r_{G,c}$ is indeed  a vector space as a  set of polynomials satisfying the above conditions is closed under addition and multiplication by scalars.

The following theorem summarizes the main idea of this article. 
\b{\begin{theorem} 
\label{thm:dim_Lowerbound}
Let  $c:E \to \mathbb{R}$ be a proper edge colouring of a graph $G=(V,E)$, and $r \ge 0$ be an integer. We have
$$m_e(G,r) \ge \dim(W^r_{G,c}).$$
\end{theorem}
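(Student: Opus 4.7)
The plan is to prove injectivity of the restriction map. Specifically, for any percolating set $E_0 \subseteq E$ of the $r$-bond bootstrap process, I would consider the linear map
$$
\rho: W^r_{G,c} \to \mathbb{R}^{E_0}, \qquad \phi \mapsto \phi\big|_{E_0},
$$
and show that $\rho$ is injective. Since $\dim(\mathbb{R}^{E_0}) = |E_0|$, this gives $\dim(W^r_{G,c}) \le |E_0|$, and taking $E_0$ to be a percolating set of minimum size $m_e(G,r)$ yields the theorem.

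To prove injectivity, suppose $\phi \in W^r_{G,c}$ satisfies $\phi|_{E_0} = 0$, and let $\{p_v\}_{v \in V}$ be polynomials recognizing $\phi$. Let $E_0 = F_0 \subseteq F_1 \subseteq F_2 \subseteq \cdots$ be the filtration where $F_t$ denotes the set of edges infected after $t$ steps of the $r$-bond bootstrap process; since $E_0$ percolates, $\bigcup_t F_t = E$. I would show by induction on $t$ that $\phi$ vanishes identically on $F_t$. The base case $t=0$ is our hypothesis. For the inductive step, suppose $\phi$ vanishes on $F_t$, and let $uv \in F_{t+1} \setminus F_t$. By the percolation rule, one of the endpoints, say $u$, is incident to at least $r$ edges $uv_1, \ldots, uv_r$ in $F_t$. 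Here is the crucial point where the proper edge colouring enters: because $c$ is proper, the values $c_{uv_1}, \ldots, c_{uv_r}$ are \emph{pairwise distinct}. By the inductive hypothesis, $p_u(c_{uv_i}) = \phi(uv_i) = 0$ for $i = 1, \ldots, r$, so $p_u$ has $r$ distinct roots. Since $\deg(p_u) \le r-1$, this forces $p_u \equiv 0$. In particular $\phi(uv) = p_u(c_{uv}) = 0$, completing the induction.

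There is no real obstacle; the argument is a short inductive application of the basic polynomial-method fact that a nonzero univariate polynomial of degree $\le r-1$ has at most $r-1$ roots. The only subtlety worth flagging is why the colouring condition is used: it is exactly what guarantees that the $r$ colours on edges incident to a common vertex $u$ produce $r$ \emph{distinct} evaluation points for $p_u$, enabling us to conclude $p_u \equiv 0$ rather than merely that $p_u$ vanishes at a multiset with repeated points. This step is what constrains how powerful the method can be and is presumably what guides the choice of colouring $c$ in later applications to tori and grids.
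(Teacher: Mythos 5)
Your proposal is correct and follows essentially the same route as the paper's proof: both reduce the theorem to showing that any $\phi \in W^r_{G,c}$ vanishing on a percolating set must vanish identically, and both establish this by tracking the bootstrap process and using that a degree-$\le r-1$ polynomial with $r$ distinct roots (distinctness supplied by the proper colouring) is identically zero. Your phrasing via injectivity of the restriction map and an explicit induction on the infection filtration is just a slightly more formal packaging of the paper's dimension-count argument.
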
 }
\begin{proof} 
\b{
Let $F \subseteq E$ be a percolating set for the $r$-bond bootstrap process in $G$.  
We claim that if a vector $(p_v)_{v \in V} \in W^r_{G,c}$ satisfies $p_u(c_{uv})=p_v(c_{uv})=0$ for all $uv \in F$, then $p_v \equiv 0$ for all $v \in V$.  To prove this claim, observe that throughout the process, the condition $p_u(c_{uv})=p_v(c_{uv})=0$ will be forced for the newly infected edges $uv$.  Note that if a vertex $u$ is incident to at least $\alpha_u := \min \{r, \deg(u)\}$  infected edges, then we know that $p_u$ has at least $\alpha_{u}$ distinct roots, as it has to evaluate to $0$ on the colours of its neighbouring infected edges. However, since the degree of $p_u$ is at most $\alpha_{u}-1$, this implies $p_u\equiv 0$, and thus $p_u$ evaluates to $0$ on all the edges incident to $u$. This corresponds to the  spreading of infection  to all the edges incident to $u$. Since $F$ percolates, eventually all the polynomials $p_u$ in the vector $(p_u)_{u \in V}$ will be forced to be equivalent to $0$. }

\b{
Now let us show that theorem follows from the above claim. Recall that 
$\alpha_v = \min \{r, \deg(v)\}$ for $v \in V$, and define the vector space  $X= \{ (p_u)_{u \in V} \ | \ \deg(p_u) \leq \alpha_{u}-1 \ \forall u \in V\}$, and its subspace $Y =  \big\{(p_u)_{u \in V} \in X  \ | \ p_u(c_{uv})=0 \ \forall uv \in F \big\} $. Note that in the definition of $Y$, we force the condition $p_u(c_{uv})=0$ only for one endpoint of the edge, which can be chosen arbitrarily. Clearly, $W_{G,c}^r \subset X$ and $Y \subset X$, and $\codim(Y) \le |F|$ in $X$.  The claim proven in the previous paragraph shows that  $W^r_{G,c} \cap  Y = \{\vec{0}\}.$ Hence  $$\dim(W^r_{G,c}) + \dim\left( Y \right) \leq \dim\left( X\right),$$
which yields the desired bound $\dim(W_{G,c}^r) \leq |F|$. }
\end{proof}

To warm up let us consider a simple example. Let $G=(V,E)$ be a graph with \emph{maximum} degree $r$. Obviously, $m_e(G,r)=|E|$, as in such a graph, the initial infection cannot spread to any new edges. The following proposition shows that the lower bound provided by Theorem~\ref{thm:dim_Lowerbound} is sharp for such graphs. 

\b{
\begin{proposition}
\label{prop:lowdegree}
Let $r$ be a non-negative integer, and let  $c:E \to \mathbb{R}$ be a proper edge colouring of a graph $G=(V,E)$  with maximum degree $r$. We have $m_e(G,r)=  \dim(W^r_{G,c})=|E|$. 
\end{proposition}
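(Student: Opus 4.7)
The plan is to sandwich $\dim(W^r_{G,c})$ between $|E|$ on both sides and then invoke Theorem~\ref{thm:dim_Lowerbound}. Since $W^r_{G,c}$ is by definition a subspace of $\mathbb{R}^E$, we trivially have $\dim(W^r_{G,c}) \le |E|$. So the real content is the lower bound $\dim(W^r_{G,c}) \ge |E|$, which I would establish by showing that \emph{every} function $\phi:E\to\mathbb{R}$ lies in $W^r_{G,c}$, i.e.\ that $W^r_{G,c}=\mathbb{R}^E$.

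To prove this, fix an arbitrary $\phi:E\to\mathbb{R}$ and construct the recognizing polynomials $p_v$ one vertex at a time, independently. For a vertex $v\in V$, let $N(v)$ denote its neighbours. Since $c$ is a proper edge colouring, the colours $\{c_{vw}:w\in N(v)\}$ are pairwise distinct, and since $\deg(v)\le r$ we have at most $r$ such colours. By Lagrange interpolation there is a polynomial $p_v\in\mathbb{R}[x]$ of degree at most $\deg(v)-1\le r-1$ with $p_v(c_{vw})=\phi(vw)$ for all $w\in N(v)$. Doing this at every vertex produces a family $\{p_v\}_{v\in V}$ for which condition~(1) of Definition~\ref{def:Wspace} holds by construction, and condition~(2) holds because at any edge $uv$ both $p_u$ and $p_v$ have been forced to take the common value $\phi(uv)$ at the point $c_{uv}$. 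Thus $\phi\in W^r_{G,c}$, and $\dim(W^r_{G,c})=|E|$.

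Combining, Theorem~\ref{thm:dim_Lowerbound} gives $m_e(G,r)\ge\dim(W^r_{G,c})=|E|$, while obviously $m_e(G,r)\le|E|$ (the whole edge set percolates). Hence all three quantities coincide.

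There is essentially no obstacle here; the statement is a warm-up. The only point to double-check is that the two conditions $p_u(c_{uv})=\phi(uv)$ and $p_v(c_{uv})=\phi(uv)$ do not clash across different edges incident to the same vertex, which is exactly where the properness of $c$ (ensuring distinct interpolation nodes at each vertex) and the degree bound $\deg(v)\le r$ (ensuring the number of nodes does not exceed $r$) are used in tandem.
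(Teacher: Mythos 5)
Your proof is correct and uses the same essential mechanism as the paper: Lagrange interpolation at each vertex, made possible by the properness of $c$ and the degree bound $\deg(v)\le r$. The only cosmetic difference is that you show $W^r_{G,c}=\mathbb{R}^E$ directly for an arbitrary $\phi$, whereas the paper interpolates the indicator functions $\phi_{e_0}\colon e\mapsto 1_{[e=e_0]}$ and observes that these $|E|$ vectors are linearly independent.
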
 }
\begin{proof}
 
Consider an edge $e_0=u_0v_0 \in E$. Let the vector $\mathbf{p}_{e_0} = (p_u)_{u \in V}$ for the  edge $e_0$ be defined as follows. 
Let $p_{u_0}$ be a polynomial of degree \b{$\deg(u_0)-1$} that is equal to $1$ on $c_{u_0v_0}$, and is equal to $0$ on $c_{u_0v}$ for every $u_0v \in E \setminus \{u_0v_0\}$.  \b{Such a polynomial exists since  $p_{u_0}$ has $\deg(u_0)-1$ roots, and this does not exceed its degree. } Similarly,  choose $p_{v_0}$ to be a polynomial of degree \b{$\deg(v_0)-1$} that is equal to $1$ on $c_{u_0v_0}$ and is equal to $0$ on $c_{uv_0}$ for every $uv_0 \in E \setminus \{u_0v_0\}$. Set $p_u \equiv 0$ for all $u \not\in \{u_0,v_0\}$. \v{Clearly, the vector $\mathbf{p}_{e_0}$, constructed in this way, belongs to  $W_{G,c}^r$. } Next note that the vectors $\{\mathbf{p}_{e_0} : e_0 \in E\}$ are linearly independent, and thus $\dim(W^r_{G,c}) \geq |E|$.   Combining this with Theorem~\ref{thm:dim_Lowerbound} and the trivial inequality $m_e(G,r) \leq |E|$, we deduce the statement of the proposition.
\end{proof}

\section{Tori and Grids}

In this section, we apply Theorem~\ref{thm:dim_Lowerbound} \b{to obtain recursive formulas for} $m_e(G,r)$ for tori and grids in arbitrary dimensions.  In fact, our results are more general as they apply to the Cartesian products of arbitrary graphs with cycles and paths. The cases of the tori and grids will follow easily from those    by simple inductions.  First we prove an upper bound on $m_e(G\square C_k,r)$ by constructing a percolating set of appropriate size. \b{Here we assume $r>1$, since, trivially, $m_e(G,0)=0$ and $m_e(G,1)=1$ for every connected graph $G$. }

\begin{proposition}
\label{prop:cycle_up}
Let \b{$r>1$}, $k \ge 3$ be integers, and $G=(V,E)$ be a graph. We have
$$m_e(G\square C_k,r)\leq m_e(G,r)+(k-2)m_e(G,r-1)+m_e(G,r-2)+d_{r-1}+k(d_0+\ldots+d_{r-2}),$$
where $d_t$ denotes the number of vertices with degree exactly $t$ in $G$.
\end{proposition}
\begin{proof}
For every vertex $v\in G$, denote its corresponding vertices in $G\square C_k$ by $v_1,v_2,\dots,v_k$. Let $G_1,\ldots,G_k$ denote the $k$ copies of $G$ in $G \square C_k$ corresponding to the $k$ vertices of $C_k$. First let us consider the case where every  vertex of $G$ is of degree at least $r$.  Construct a percolating set $F$ for $G \square C_k$ in the following manner. Pick an optimal $r$-percolating set $F_r(G_1)$ for $G_1$, optimal $(r-1)$-percolating sets $F_{r-1}(G_l)$ for $G_l$, $l \in \{2,\ldots,{k-1}\}$, and an optimal $(r-2)$-percolating set $F_{r-2}(G_k)$ for $G_k$. 

Since  $F_r(G_1) \subseteq F$, after running the $r$-bond  bootstrap process on $G_1$, all the edges in $G_1$ will be infected, and then due to the degree condition, the infection will pass to all the edges between $G_1$ and $G_2$, and  $G_1$ and $G_k$. Now every vertex in $G_2$ has an infected edge coming from $G_1$. This together with the edges in $F_{r-1}(G_2) \subseteq F$ infects all the edges in $G_2$, and consequently all the edges between $G_2$ and $G_3$ will be infected. Continuing in this manner, all the edges will be infected except possibly the edges inside $G_k$. However, at this point, every vertex in $G_k$ has two external infected edges incident to it,  one from $G_1$ and one from $G_{k-1}$. Thus the edges in $F_{r-2}(G_k) \subseteq F$ will eventually infect all the edges in $G_k$.

It remains to deal with the vertices of degrees less than $r$. If $\deg_G(v) = r-1$, then we only need to add the edge $v_1v_k$ to the above set. This will guarantee that once  $G_1$ is fully infected, $v_1v_2$ will become infected, and the process proceeds as it is described above. Finally,  for the vertices with degree $\deg_G(v) < r-1$, one can (and must) simply include  all the edges $v_iv_{i+1}$ for $i = 1,\ldots,k$ (let $v_{k+1} = v_1$). 

\end{proof}

Now we turn to proving a lower bound for $m_e(G\square C_k,r)$. By Theorem~\ref{thm:dim_Lowerbound}, it suffices  to prove a lower bound for  $\dim(W_{G\square C_k,c'}^r)$ where $c'$ is a proper edge colouring of $G \square C_k$. This is achieved in Theorem~\ref{thm:cycle}  below, which complements Proposition~\ref{prop:cycle_up}.

\b{
\begin{theorem}
\label{thm:cycle}
Let \b{$r>1$}, $k \ge 3$ be integers,   and  $c:E \to \mathbb{R}$ be a proper edge colouring of a graph $G=(V,E)$. There exists a proper edge colouring $c'$ of $G \square C_k$ for which 
$$\dim(W_{G\square C_k,c'}^r)\geq\dim(W_{G,c}^r)+(k-2)\dim(W_{G,c}^{r-1})+\dim(W_{G,c}^{r-2})+d_{r-1}+k(d_0+\ldots+d_{r-2}),$$
where $d_t$ denotes the number of vertices with degree exactly $t$ in $G$.
\end{theorem}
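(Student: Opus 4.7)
The plan is to lift the problem from $W$ to an auxiliary polynomial-tuple space and construct an explicit injection. I define $\tilde W^s_{G,c}$ as the vector space of tuples $(p_v)_{v \in V(G)}$ with $\deg p_v \le s-1$ and $p_u(c_{uv}) = p_v(c_{uv})$ for every $uv \in E(G)$; the evaluation map surjects $\tilde W^s_{G,c}$ onto $W^s_{G,c}$ with kernel of dimension $\sum_{t<s}(s-t) d_t$, accounting for the freedom at low-degree vertices. I would fix $c'$ to extend $c$ on each copy $G_j$ of $G$ and to use $k$ distinct fresh colors $\alpha_1, \ldots, \alpha_k \notin c(E)$ on the matching edges (color $\alpha_j$ between $G_j$ and $G_{j+1}$, indices mod $k$); this is manifestly a proper edge coloring.

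The heart of the argument is constructing an injective linear map
\[
\tilde\Phi : \tilde W^r_{G,c} \oplus (\tilde W^{r-1}_{G,c})^{k-2} \oplus \tilde W^{r-2}_{G,c} \longrightarrow \tilde W^r_{G \square C_k, c'}.
\]
Fix a basis $v^{(1)}, \ldots, v^{(k-2)}$ of $U := \{c \in \mathbb{R}^k : \sum_j c_j = 0 = \sum_j \alpha_j c_j\}$, and set $f^{(j)}_l(x) := \sum_{i=1}^{j-1}(x-\alpha_i) v^{(l)}_i$, of degree at most $1$. The map $\tilde\Phi$ is a sum of three natural lifts: Type A sends $(p_v) \mapsto (P_{v_j} := p_v)$; Type B (one per index $l$) sends $(q_v) \mapsto (P_{v_j} := f^{(j)}_l(x)\, q_v(x))$; and Type C sends $(h_v) \mapsto (P_{v_j} := \mathbf{1}_{[j=2]}(x-\alpha_1)(x-\alpha_2) h_v(x))$. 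Each image has degree at most $r-1$; the only non-trivial verification is that for Type B the cross-copy consistency at $v_j v_{j+1}$ reduces to $f^{(j+1)}_l(\alpha_j) = f^{(j)}_l(\alpha_j)$, while the cyclic closure is guaranteed by $f^{(k+1)}_l \equiv 0$, which is exactly the statement $v^{(l)} \in U$.

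The injectivity argument proceeds by first evaluating the image at $j = 1$ to force $p_v \equiv 0$ (Types B and C vanish identically there), and then writing $s_j(x) := \sum_l v^{(l)}_j q^{(l)}_v(x)$ and using the vanishing of each difference $P_{v_{j+1}} - P_{v_j}$, after peeling off the factor $(x - \alpha_j)$, to obtain $s_j \equiv 0$ for $j \in \{3, \ldots, k\}$ together with the boundary relations $s_1 = -(x - \alpha_2) h_v$ and $s_2 = (x - \alpha_1) h_v$. Summing $s_1 + \cdots + s_k$ and invoking $\sum_j v^{(l)}_j = 0$ collapses to $(\alpha_2 - \alpha_1) h_v \equiv 0$, giving $h_v \equiv 0$ and then $s_j \equiv 0$ for every $j$; linear independence of the $v^{(l)}$ in $\mathbb{R}^k$ then kills each $q^{(l)}_v$.

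Finally I would translate the $\tilde W$ bound back to $W$ by subtracting kernel dimensions on both sides. Since $\deg_{G\square C_k}(v_j) = \deg_G(v) + 2$, the evaluation kernel at $G \square C_k$ contributes $k \sum_{t=0}^{r-3}(r-2-t) d_t$, while the three domain kernels sum to $\sum_{t<r}(r-t) d_t + (k-2)\sum_{t<r-1}(r-1-t) d_t + \sum_{t<r-2}(r-2-t) d_t$; regrouping by $t$, the contribution at each $t \le r-3$ collapses to exactly $k$, the contribution at $t = r-2$ is $k$, and that at $t = r-1$ is $1$, giving precisely $d_{r-1} + k(d_0 + \cdots + d_{r-2})$. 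The main obstacle I foresee is the injectivity step, where the Type B polynomials $f^{(j)}_l$ and the Type C contribution must be calibrated so that the telescoping across copies, combined with the defining relations of $U$, eliminates all of $(p_v), (q^{(l)}_v), (h_v)$ simultaneously.
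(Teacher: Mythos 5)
Your proof is correct, and it takes a genuinely different route from the paper's. The paper works directly inside $W^r_{G\square C_k,c'}$: it builds $k$ explicit families $B_1,\ldots,B_k$ of functions, each supported essentially on one copy $G_\ell$ (the middle families use the local factors $\frac{x-\alpha_{\ell-1}}{\alpha_\ell-\alpha_{\ell-1}}q_v$ and $\frac{x-\alpha_{\ell+1}}{\alpha_\ell-\alpha_{\ell+1}}q_v$ on $G_\ell$ and $G_{\ell+1}$ only, with zero elsewhere), proves independence by a triangular support argument, and then adds $d_{r-1}+k(d_0+\cdots+d_{r-2})$ further explicit functions supported on the matching edges of low-degree vertices. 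You instead lift to the tuple space $\tilde W^s_{G,c}$, build a single injective linear map whose Type B components are \emph{globally} supported (calibrated by a basis of the $(k-2)$-dimensional space $U$ so that the telescoping identity $f^{(j+1)}_l-f^{(j)}_l=(x-\alpha_j)v^{(l)}_j$ closes up around the cycle), and recover the low-degree terms by pure bookkeeping of evaluation-kernel dimensions rather than by constructing extra functions. I checked the details: the consistency and degree conditions for Types A, B, C hold; the injectivity argument (forcing $p_v\equiv 0$ at $j=1$, extracting $s_j$ from successive differences, summing over $j$ and using $\sum_j v^{(l)}_j=0$ to kill $h_v$, then linear independence of the $v^{(l)}$ to kill the $q^{(l)}_v$) is sound; and the kernel arithmetic does collapse to coefficient $1$ at $t=r-1$ and $k$ at each $t\le r-2$, matching the claimed bound exactly. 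What your approach buys is a uniform treatment of the low-degree vertices and a reusable framework (the tuple space plus kernel count); what the paper's buys is a shorter, more easily verified independence argument via disjointness of supports, avoiding the algebraic telescoping. Both yield the identical lower bound.
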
 }

\begin{proof}
\b{
For every vertex $v\in G$, denote its corresponding vertices in $G\square C_k$ by $v_1,v_2,\dots,v_k$. Let $G_1,\ldots,G_k$ denote the $k$ copies of $G$ in $G \square C_k$ corresponding to the $k$ vertices of $C_k$. Let $\alpha_1,\ldots,\alpha_k$ be distinct real numbers that do not belong to $c(E)$. Let $c'$ be the proper colouring of $G \square C_k$ that is consistent with $c$ on $G_1,\ldots,G_k$ and moreover $c'(v_iv_{i+1})=\alpha_i$ for all $i \in [k]$ and $v \in V(G)$ (where $v_{k+1}=v_1$). To prove the theorem, we are going to find $\dim(W_{G,c}^r)+(k-2)\dim(W_{G,c}^{r-1})+\dim(W_{G,c}^{r-2})+d_{r-1}+k(d_0+\ldots+d_{r-2})$ linearly independent vectors in $W_{G\square C_k,c'}^r$.}

\b{Consider a linear basis $B^{(r)}$ for $W_{G,c}^r$. Pick a vector $\mathbf{q} \in B^{(r)}$ and let $\mathbf{q} =(q_v)_{v\in V(G)}$. Define the vector $\mathbf{p} ^{(1)}_\mathbf{q}  = (p_u)_{u\in V(G\square C_k)}$ as $p_{v_i}=q_v$ for $i\in[k]$ and $v \in V(G)$. (See Figure~\ref{fig:case1}). Trivially, the two conditions in Definition~\ref{def:Wspace} are satisfied, and thus the vector $\mathbf{p} ^{(1)}_\mathbf{q}$ belongs to $W_{G\square C_k,c'}^r$. Set $B_1=\{\mathbf{p} ^{(1)}_\mathbf{q} :  \mathbf{q} \in B^{(r)}\}$. Note that the restriction of $\mathbf{p} ^{(1)}_\mathbf{q}$ to $G_1$ equals $\mathbf{q}$, and thus the vectors in $B_1$ are linearly independent. }

\begin{figure}[ht]
	\begin{center}
\begin{tikzpicture}[scale=1.5,line cap=round,line join=round,>=triangle 45,x=1cm,y=1cm];
\draw [rotate around={90:(-5,0)},line width=2pt] (-5,0) ellipse (1.2640130858308254cm and 0.7731294077653242cm);
\draw [rotate around={90:(-2.5873915716139604,0)},line width=2pt] (-2.5873915716139604,0) ellipse (1.276829331251797cm and 0.7939100334073831cm);
\draw [rotate around={90:(1.409808073219042,0)},line width=2pt] (1.409808073219042,0) ellipse (1.2653040967520912cm and 0.7752383228773105cm);\draw [line width=1pt] (-5.006104774264048,0.6108243972406264)-- (-2.606356755426423,0.6081713697693732);
\draw [line width=1pt] (-2.606356755426423,0.6081713697693732)-- (-1.4680950754729831,0.6108243972406264);
\draw [line width=1.6pt,dashed] (-0.8226191364564878,0)-- (-0.2581452064027107,0);
\draw [line width=1pt] (1.4014081479553946,0.6081713697693732)-- (0.19343393764031155,0.6081713697693732);
\draw [shift={(-1.80745680222131,-4.622752092645986)},line width=1pt]  plot[domain=1.0205520321456385:2.119394590302861,variable=\t]({1*6.136723436665533*cos(\t r)+0*6.136723436665533*sin(\t r)},{0*6.136723436665533*cos(\t r)+1*6.136723436665533*sin(\t r)});
\draw (-5.194672627773725,-0.7187744153330075) node[anchor=north west] {$\mathit{G_1}$};
\draw (-2.730293614151421,-0.7187744153330075) node[anchor=north west] {$\mathit{G_2}$};
\draw (1.2871575903474395,-0.7187744153330075) node[anchor=north west] {$\mathit{G_k}$};
\draw (-3.821294739973795,0.85) node[anchor=north west] {$\mathit{\alpha_1}$};
\draw (-1.7933161766804406,0.85) node[anchor=north west] {$\mathit{\alpha_2}$};
\draw (0.1,0.85) node[anchor=north west] {$\mathit{\alpha_{k-1}}$};
\draw (-1.8959986355813698,1.74) node[anchor=north west] {$\mathit{\alpha_k}$};
\draw (-5.22,0.92) node[anchor=north west] {$\mathit{v_1}$};
\draw (-2.672534731019648,0.9) node[anchor=north west] {$\mathit{v_2}$};
\draw (1.3705870882044446,0.9) node[anchor=north west] {$\mathit{v_k}$};
\draw (-5.25,0.58) node[anchor=north west] {$p_{v_1}=q_v$};
\draw (-2.85,0.58) node[anchor=north west] {$p_{v_2}=q_v$};
\draw (1.15,0.58) node[anchor=north west] {$p_{v_k}=q_v$};\begin{scriptsize}
\draw [fill=black] (-5.006104774264048,0.6108243972406264) circle (1.5pt);\draw [fill=black] (-2.606356755426423,0.6081713697693732) circle (1.5pt);\draw [fill=black] (1.4014081479553946,0.6081713697693732) circle (1.5pt);\end{scriptsize}
\end{tikzpicture}
	\end{center}
	\caption{\label{fig:case1} The vector $(q_v)_{v\in V(G)} \in B^{(r)}$ is used to construct $\mathbf{p}^{(1)}_\mathbf{q} = (p_u)_{u\in V(G\square C_k)}$ such that $\mathbf{p}^{(1)}_\mathbf{q} \in B_1 \subseteq W_{G\square C_k,c'}^r$.}
\end{figure}




\b{ Next, consider a linear basis $B^{(r-1)}$ for $W_{G,c}^{r-1}$, and fix $\ell \in \{2,\ldots,k-1\}$. Let $\mathbf{q} \in B^{(r-1)}$ and $\mathbf{q}=(q_v)_{v\in V(G)}$.  Note that $\deg(q_v) \le \min\{r-1, \deg_G(v)\}-1$  for all $v$. Define the vectors $\mathbf{p}^{(\ell)}_\mathbf{q}=(p_u)_{u\in V(G\square C_k)}$ as 
$$p_{v_\ell}=\frac{x-\alpha_{\ell-1}}{\alpha_\ell-\alpha_{\ell-1}}q_v, \qquad p_{v_{\ell+1}}=\frac{x-\alpha_{\ell+1}}{\alpha_\ell-\alpha_{\ell+1}}q_v, \qquad
\text{for all } v \in V(G),$$ 
and  $p_{v_j} \equiv 0$ for all $j \not\in \{\ell,\ell+1\}$ and $v \in V(G)$. (See Figure~\ref{fig:case2}). Note that $\deg(p_{v_\ell})  \leq  \deg(q_v)+1 = \min \{r-1, \deg_G(v)\} = \min \{r-1, \deg(v_\ell) - 2\} \leq \min \{ r, \deg(v_\ell) \} -1$, and similarly, $\deg(p_{v_{\ell +1}}) \leq \min \{r,\deg(v_{\ell+1})\} -1$. Moreover, $p_{v_\ell}(\alpha_{\ell})=p_{v_{\ell+1}}(\alpha_{\ell})=q_v(\alpha_\ell)$, and $p_{v_\ell}(\alpha_{\ell-1})=p_{v_{\ell+1}}(\alpha_{\ell+1})=0$. Hence both conditions in  Definition~\ref{def:Wspace} are satisfied, and the vectors $\mathbf{p}^{(\ell)}_\mathbf{q}$ belong to $W_{G\square C_k,c'}^r$. Define $B_\ell=\{ \mathbf{p}^{(\ell)}_\mathbf{q} : \mathbf{q} \in B^{(r-1)}\}$. Note that the restriction of $\mathbf{p}^{(\ell)}_\mathbf{q}$ to $G_\ell$ is the following vector $\Big( \frac{x-\alpha_{\ell-1}}{\alpha_\ell-\alpha_{\ell-1}} q_v \Big)_{v \in V(G)}$, and since $\frac{x-\alpha_{\ell-1}}{\alpha_\ell-\alpha_{\ell-1}} \not\equiv 0$,  the vectors in $B_\ell$ are linearly independent. }

\begin{figure}[ht]
	\begin{center}
\begin{tikzpicture}[scale=1.5, line cap=round,line join=round,>=triangle 45,x=1cm,y=1cm]
\draw [rotate around={90:(-5.804939824256687,-0.01603105961352884)},line width=2pt] (-5.804939824256687,-0.01603105961352884) ellipse (1.2640130858308207cm and 0.7731294077653214cm);
\draw [rotate around={90:(-2,0)},line width=2pt] (-2,0) ellipse (1.2768293312517978cm and 0.7939100334073836cm);
\draw [rotate around={90:(3.8093286203523364,0.019143388232187277)},line width=2pt] (3.8093286203523364,0.019143388232187277) ellipse (1.2653040967520905cm and 0.7752383228773104cm);
\draw [line width=1pt] (-2.0043453090240657,0.5900743355718491)-- (-0.011213519910090805,0.6013278931046495);
\draw [line width=1.6pt,dashed] (1.6081185011410888,0)-- (2.2064608669980923,0);
\draw [line width=1pt] (3.8012690615790294,0.6037346046791505)-- (2.585505111029205,0.6037346046791509);
\draw [shift={(-0.9845379188184807,-6.874511416767466)},line width=1pt]  plot[domain=1.001508794470347:2.140950107832702,variable=\t]({1*8.878519697049937*cos(\t r)+0*8.878519697049937*sin(\t r)},{0*8.878519697049937*cos(\t r)+1*8.878519697049937*sin(\t r)});
\draw [rotate around={90:(0,0)},line width=2pt] (0,0) ellipse (1.2569887070810324cm and 0.7615908414163378cm);
\draw [line width=1pt] (-0.011213519910090805,0.6013278931046495)-- (1.395198934724808,0.6043555260022104);
\draw [line width=1pt] (-2.0043453090240657,0.5900743355718491)-- (-3.3983136792918662,0.5919370995410274);
\draw [line width=1pt] (-5.788239851746554,0.6173948737864519)-- (-4.613456708518633,0.6173948737864522);
\draw [line width=1.6pt,dashed] (-4.18998836619229,0.0026827639578891546)-- (-3.616257063685631,0.0026827639578891546);
\draw (-5.987863491319479,-0.7364753211228021) node[anchor=north west] {$\mathit{G_1}$};
\draw (-2.1833092027570236,-0.7364753211228021) node[anchor=north west] {$\mathit{G_{\ell}}$};
\draw (-0.25280836197310713,-0.7364753211228021) node[anchor=north west] {$\mathit{G_{\ell+1}}$};
\draw (3.6194827981958015,-0.7364753211228021) node[anchor=north west] {$\mathit{G_k}$};
\draw (-5.987863491319479,0.92) node[anchor=north west] {$\mathit{v_1}$};
\draw (-2.149440766953797,0.90) node[anchor=north west] {$\mathit{v_{\ell}}$};
\draw (-0.12862409736127625,0.9) node[anchor=north west] {$\mathit{v_{\ell+1}}$};
\draw (3.709798627004406,0.9) node[anchor=north west] {$\mathit{v_k}$};
\draw (-1.1333876928569988,2.3) node[anchor=north west] {$\mathit{\alpha_k}$};
\draw (-5.107284160435587,0.85) node[anchor=north west] {$\mathit{\alpha_1}$};
\draw (-3.3122570628645773,0.85) node[anchor=north west] {$\alpha_{\ell-1}$};
\draw (-1.1108087356548477,0.85) node[anchor=north west] {$\mathit{\alpha_{\ell}}$};
\draw (0.7067973191183133,0.85) node[anchor=north west] {$\alpha_{\ell+1}$};
\draw (2.49,0.85) node[anchor=north west] {$\mathit{\alpha_{k-1}}$};
\draw (-6.05,0.63) node[anchor=north west] {$\mathit{p_{v_1}=0}$};
\draw (3.55,0.63) node[anchor=north west] {$\mathit{p_{v_k}=0}$};
\draw (-4.5,-0.5558436635055932) node[anchor=north west] {$\mathit{p_{v_{\ell}}=\frac{x-\alpha_{\ell-1}}{\alpha_{\ell}-\alpha_{\ell-1}}q_v}$};\draw (0.7632447121236912,-0.5558436635055932) node[anchor=north west] {$\mathit{p_{v_{\ell+1}}=\frac{x-\alpha_{\ell+1}}{\alpha_{\ell}-\alpha_{\ell+1}}q_v}$};\draw [->,line width=0.2pt] (-3.2558096698592,-0.6235805351120465)-- (-2.0704144167462686,0.43763045338905415);\draw [->,line width=0.2pt] (1.2599817705710148,-0.6235805351120465)-- (0.1,0.44);
\begin{scriptsize}\draw [fill=black] (-5.788239851746554,0.6173948737864519) circle (1.5pt);\draw [fill=black] (-2.0043453090240657,0.5900743355718491) circle (1.5pt);\draw [fill=black] (3.8012690615790294,0.6037346046791505) circle (1.5pt);\draw [fill=black] (-0.011213519910090805,0.6013278931046495) circle (1.5pt);\end{scriptsize}
\end{tikzpicture}

	\end{center}
	\caption{\label{fig:case2} The vector $(q_v)_{v\in V(G)} \in B^{(r-1)}$ is used to construct $\mathbf{p}^{(\ell)}_\mathbf{q} = (p_u)_{u\in V(G\square C_k)}$ such that  $\mathbf{p}^{(\ell)}_\mathbf{q} \in B_\ell \subseteq W_{G\square C_k,c'}^r$.}
\end{figure}

\b{
Finally, consider a linear basis $B^{(r-2)}$ for $W_{G,c}^{r-2}$, and pick $\mathbf{q} \in B^{(r-2)}$ and let $\mathbf{q} = (q_v)_{v\in G}$. Note that  $\deg(q_v) \leq \min \{r-2, \deg_G(v)\}-1$.}

\b{ Define the vector $\mathbf{p}^{(k)}_\mathbf{q} = (p_u)_{u\in V(G\square C_k)}$ as $p_{v_i}\equiv0$ for all $i\leq k-1$, and $p_{v_k}=(x-\alpha_{k-1})(x-\alpha_k)q_v$. (See Figure~\ref{fig:case3}). Note that  $\deg(p_{v_k})  \leq 2+\deg(q_v) = \min \{r-2, \deg(v_k)-2\}+1= \min \{\deg(v_k), r\} - 1$, and $p_{v_k}(\alpha_{k})=p_{v_k}(\alpha_{k-1})=0$. Hence the vector $\mathbf{p}^{(k)}_\mathbf{q}$ belongs to $W_{G\square C_k,c'}^r$. Define $B_k=\{\mathbf{p}^{(k)}_\mathbf{q} : \mathbf{q} \in B^{(r-2)}\}$. Similar to above, the restriction of $\mathbf{p}^{(k)}_\mathbf{q}$ to $G_k$ shows that the vectors in $B_k$ are linearly independent.  }
 
\begin{figure}[ht]
	\begin{center}
\begin{tikzpicture}[scale=1.5,line cap=round,line join=round,>=triangle 45,x=1cm,y=1cm]
\draw [rotate around={90:(-5,0)},line width=2pt] (-5,0) ellipse (1.2640130858308254cm and 0.7731294077653242cm);
\draw [rotate around={90:(-2.5873915716139604,0)},line width=2pt] (-2.5873915716139604,0) ellipse (1.276829331251797cm and 0.7939100334073831cm);\draw [rotate around={90:(1.409808073219042,0)},line width=2pt] (1.409808073219042,0) ellipse (1.2653040967520912cm and 0.7752383228773105cm);\draw [line width=0.8pt] (-5.006104774264048,0.6108243972406264)-- (-2.606356755426423,0.6081713697693732);
\draw [line width=0.8pt] (-2.606356755426423,0.6081713697693732)-- (-1.4680950754729831,0.6108243972406264);
\draw [line width=2pt,dashed] (-0.8226191364564878,0)-- (-0.2581452064027107,0);
\draw [line width=0.8pt] (1.4014081479553946,0.6081713697693732)-- (0.19343393764031155,0.6081713697693732);
\draw [shift={(-1.80745680222131,-4.622752092645986)},line width=0.8pt]  plot[domain=1.0205520321456385:2.119394590302861,variable=\t]({1*6.136723436665533*cos(\t r)+0*6.136723436665533*sin(\t r)},{0*6.136723436665533*cos(\t r)+1*6.136723436665533*sin(\t r)});
\draw (-5.180874700005004,-0.7850218873788958) node[anchor=north west] {$\mathit{G_1}$};
\draw (-2.7763623745811246,-0.8143452084206504) node[anchor=north west] {$\mathit{G_2}$};
\draw (1.2409326081392609,-0.8045707680733989) node[anchor=north west] {$\mathit{G_k}$};
\draw (-5.2,0.92) node[anchor=north west] {$\mathit{v_1}$};
\draw (-2.7,0.9) node[anchor=north west] {$\mathit{v_2}$};
\draw (1.3,0.9) node[anchor=north west] {$\mathit{v_k}$};
\draw (-3.82222749173704,0.88) node[anchor=north west] {$\mathit{\alpha_1}$};
\draw (-1.8282416608977246,0.88) node[anchor=north west] {$\mathit{\alpha_2}$};
\draw (0.1,0.88) node[anchor=north west] {$\mathit{\alpha_{k-1}}$};
\draw (-1.945534945064743,1.8) node[anchor=north west] {$\mathit{\alpha_k}$};
\draw (-5.3,0.63) node[anchor=north west] {$\mathit{p_{v_1}=0}$};
\draw (-2.9,0.63) node[anchor=north west] {$\mathit{p_{v_2}=0}$};
\draw (2.130426228619845,1.2673075778931697) node[anchor=north west] {$p_{v_k}=(x-\alpha_{k-1})(x-\alpha_k)q_v$};\draw [<-, line width=0.2pt] (1.5964338907889724,0.6215494019116472)-- (2.3042841990764087,0.8699179311353097);\begin{scriptsize}\draw [fill=black] (-5.006104774264048,0.6108243972406264) circle (1.5pt);\draw [fill=black] (-2.606356755426423,0.6081713697693732) circle (1.5pt);\draw [fill=black] (1.4014081479553946,0.6081713697693732) circle (1.5pt);\end{scriptsize}
\end{tikzpicture}
	\end{center}
	\caption{\label{fig:case3} The vector $(q_v)_{v\in V(G)}\in B^{(r-2)}$ are used to construct $\mathbf{p}^{(k)}_\mathbf{q} = (p_u)_{u\in V(G\square C_k)}$ such that  $\mathbf{p}^{(k)}_\mathbf{q} \in B_k \subseteq W_{G\square C_k,c'}^r$.}
\end{figure}

\b{
We will show that the elements of $B_1 \cup \ldots \cup B_k$ are linearly independent.  We have already shown that the vectors in each individual $B_i$ are linearly independent.  Next, note that if a vector $(p_u)_{u \in V(G\square C_k)} \in B_j$ for some $j \in [k]$, then $p_v \equiv 0$ for all $v \in \bigcup_{i=1}^{j-1} V(G_i)$, and $p_v \not \equiv 0$ for at least one vertex $v \in V(G_j)$.  
 Thus $B_j$ does not intersect the span of $B_{j+1} \cup \ldots \cup B_{k}$.  These show that $B_1 \cup \ldots \cup B_k$  consists  of exactly $\dim(W_{G,c}^r)+(k-2)\dim(W_{G,c}^{r-1})+\dim(W_{G,c}^{r-2})$ linearly independent vectors.  }

\b{To achieve the desired lower bound, we need to extend $B_1 \cup \ldots \cup B_k$ to include an additional  set of   $d_{r-1}+k(d_0+\ldots+d_{r-2})$  linearly independent vectors. First we show that no non-zero vector in $\mathrm{span}(B_1 \cup \ldots \cup B_k)$ evaluates to $0$ on the colours of all the edges in $E(G_1) \cup \ldots \cup E(G_k)$. Indeed consider a non-zero $\mathbf{p} = (p_u)_{u \in V(G \square C_k)}$ in the span of $B_1 \cup \ldots \cup B_k$, and consider the smallest $j$ such that there exists $u_j \in G_j$ with $p_{u_j} \not\equiv 0$. If $j=1$, then $p_{u_j}=q_u$ for  some $\mathbf{q}=(q_u)_{u\in V(G)} \in \span(B^{(r)})$, if $j \in [2,k-1]$, then $p_{u_j}=\frac{x-\alpha_{\ell-1}}{\alpha_\ell-\alpha_{\ell-1}}q_u$ for some  $\mathbf{q} \in \span(B^{(r-1)})$, and if $j=k$, then $p_{u_j}=(x-\alpha_{k-1})(x-\alpha_k)q_u$ for some $\mathbf{q} \in \span(B^{(r-2)})$. In the first case $\deg(q_u)\le \min \{r, \deg(u)\}-1$, in the second case $\deg(q_u)\le \min \{r-1, \deg(u)\}-1$, and in the third case $\deg(q_u)\le \min \{r-2, \deg(u)\}-1$. Due to these degree restrictions, in neither of these cases $q_u$ can be a non-zero polynomial that evaluates to $0$ on   the colours of   all the edges incident to it in $G$. }
 
 \b{Hence to finish the proof, it suffices to find   $d_{r-1}+k(d_0+\ldots+d_{r-2})$ linearly independent vectors in $W_{G\square C_k,c'}^r$  such that they all evaluate to $0$ on  the colours of  $E(G_1) \cup \ldots \cup E(G_k)$. This together with the previous paragraph  will guarantee that these vectors are independent from $B_1 \cup \ldots \cup B_k$. }

\b{For every vertex $v$ in $G$ with $\deg_G(v)=r-1$, let $\mathbf{p}_v = (p_u)_{u \in V(G \square C_k)}$ be the vector defined as  
$$p_{v_1}(x)=\ldots=p_{v_k}(x) := \prod_{u:vu \in E(G)}(x-c_{vu}),$$ and $p_{u_1}=\ldots=p_{u_k} \equiv 0$ for all $u \neq v$.   Note, for $i \in [k]$, $\deg(p_{v_i})= r-1 = \min\{{r, \deg(v_i)\}}-1$. So, it follows that $\mathbf{p}_v \in W_{G\square C_k,c'}^r$.}

\b{ Finally, for every vertex $v$  in $G$ with  $\deg_G(v) \le r-2$ and every $i_0 \in [k]$, define the vector $\mathbf{p}_{v}^{i_0}=(p_u)_{u \in V(G \square C_k)}$ as follows. Set $p_u \equiv 0$ for all $u \not\in \{v_{i_0}, v_{i_0+1} \}$.  Let $p_{v_{i_0}}$   be the unique polynomials of degree  $\deg(v_{i_0})-1= \deg_G(v)+1 \leq r-1$   that  is equal to $1$ on $c'_{v_{i_0} v_{i_0+1}}$, and is equal to $0$ on all the colours incident to $v_{i_0}$. Similarly, let $p_{v_{i_0+1}}$ be the unique polynomial of degree   $\deg(v_{i_0+1})-1= \deg_G(v)+1 \leq r-1$  that  is equal to $1$ on $c'_{v_{i_0} v_{i_0+1}}$, and is equal to $0$ on all the other colours incident to $v_{i_0+1}$.   Obviously,  $\mathbf{p}_{v}^{i_0}$ belongs to $W_{G\square C_k,c'}^r$, and moreover  the vectors 
$$\{\mathbf{p}_v: \deg_G(v)=r-1\} \cup \{\mathbf{p}_{v}^i: \deg_G(v)<r-1, i \in [k]\}$$ 
 are all linearly independent, and they evaluate to  $0$ on  the colours of  $E(G_1) \cup \ldots \cup E(G_k)$, and thus are independent from  $B_1 \cup \ldots \cup B_k$. Adding these  $d_{r-1}+k(d_0+\ldots+d_{r-2})$ vectors to the original $B_1 \cup \ldots \cup B_k$ yields a linearly independent set of the desired size in $W_{G\square C_k,c'}^r$.}

\end{proof}

Next we \b{state} the analogues of Proposition~\ref{prop:cycle_up} and Theorem~\ref{thm:cycle}  for the Cartesian product of arbitrary graphs with paths. 
\begin{proposition}
\label{prop:path_up}
Let $r>0$, $k \ge 2$ be integers, and $G=(V,E)$ be a graph. We have
$$m_e(G\square P_k,r)\leq m_e(G,r)+(k-1)m_e(G,r-1)+d_{r-1}+(k-1)(d_0+\ldots+d_{r-2}),$$
where $d_t$ denotes the number of vertices with degree exactly $t$ in $G$.
\end{proposition}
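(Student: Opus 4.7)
The plan is to construct an explicit percolating set $F$ for the $r$-bond bootstrap process on $G \square P_k$ whose size matches the claimed bound, mirroring the proof of Proposition~\ref{prop:cycle_up} but simplified because a path has one fewer ``wrap-around'' edge per vertex than a cycle. Letting $G_1,\ldots,G_k$ denote the $k$ copies of $G$ in $G \square P_k$ and writing $v_1,\ldots,v_k$ for the vertices above $v\in V(G)$, I would take $F$ to be the union of: an optimal $r$-percolating set of $G_1$ (size $m_e(G,r)$); for each $\ell = 2,\ldots,k$, an optimal $(r-1)$-percolating set of $G_\ell$ (size $m_e(G,r-1)$); the path edge $v_1 v_2$ for each $v$ with $\deg_G(v)=r-1$; and all path edges $v_i v_{i+1}$, $i=1,\ldots,k-1$, for each $v$ with $\deg_G(v)\le r-2$. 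Summing these contributions gives exactly the asserted bound.

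To verify percolation, I would run the process in stages. First, $G_1$ becomes fully infected via its internal $r$-percolating set, since this percolation is unaffected by anything outside $G_1$. Then, as in Proposition~\ref{prop:cycle_up}, every path edge $v_1 v_2$ becomes infected: either $v_1$ now has $\deg_G(v) \ge r$ infected incident edges (when $\deg_G(v)\ge r$), or else the edge was placed directly into $F$. At this point every $v_2 \in G_2$ carries one extra infected incident edge from the path, so the $r$-bond rule restricted to $G_2$ effectively simulates the $(r-1)$-bond process, which is percolated by the chosen set in $G_2$. I would then iterate: once $G_\ell$ is infected, the next layer of path edges $v_\ell v_{\ell+1}$ becomes infected by the same two-case analysis, and the set in $G_{\ell+1}$ percolates $G_{\ell+1}$ with the extra free edge from $v_\ell v_{\ell+1}$; repeating up to $\ell=k-1$ finishes the process.

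The main obstacle is just careful low-degree bookkeeping, exactly as in the cycle case. For $\deg_G(v)=r-1$ one checks that $v_1$ has total degree exactly $r$ in $G\square P_k$, so including $v_1 v_2$ in $F$ closes the gap: after $G_1$ is infected, $v_1$ is incident to $r$ infected edges and nothing further needs to spread through $v_l$ for this $v$ at later stages. For $\deg_G(v)\le r-2$, the vertex $v_l$ can never accumulate $r$ infected incident edges (even for interior $l$ this would require both path edges \emph{and} all in-copy edges to be infected, and the in-copy ones can never be spread through $v_l$), so infection cannot propagate along the $v$-column under the $r$-bond rule, which is why we must include all $k-1$ path edges $v_i v_{i+1}$ outright; the in-copy edges at $v_\ell$ are handled by the $(r-1)$-percolating sets $F_{r-1}(G_\ell)$ by definition of $m_e(G,r-1)$. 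The only structural difference from Proposition~\ref{prop:cycle_up} is that $G_k$ has a single neighbouring copy rather than two, so no $(r-2)$-percolating set in $G_k$ and no wrap-around edges are needed; this accounts for the bound having $(k-1)m_e(G,r-1)$ instead of $(k-2)m_e(G,r-1)+m_e(G,r-2)$, and coefficient $k-1$ instead of $k$ on the low-degree correction.
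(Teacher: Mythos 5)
Your construction and count coincide with the paper's (whose proof simply defers to Proposition~\ref{prop:cycle_up}, replacing the $(r-2)$-percolating set on $G_k$ by an $(r-1)$-percolating set), and the percolation argument is sound. One sentence should be tightened: for $\deg_G(v)=r-1$ the interior path edges $v_\ell v_{\ell+1}$ with $\ell\ge 2$ are neither in $F$ nor infectable via $r$ in-copy edges at $v_\ell$, yet they must still become infected --- and they do, because once $G_\ell$ is fully infected $v_\ell$ carries $r-1$ in-copy infected edges plus the already-infected $v_{\ell-1}v_\ell$, for a total of $r$; so it is not accurate to say nothing further needs to spread through $v_\ell$ at later stages.
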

The proof of this proposition is almost identical to the proof of Proposition~\ref{prop:cycle_up}. The only difference is that instead of an $(r-2)$-percolating set, we pick an $(r-1)$-percolating set for $G_k$.

Similar to Theorem~\ref{thm:cycle} one can complement Proposition~\ref{prop:path_up} with the following lower bound. 
\begin{theorem}
\label{thm:path}
Let $r>0$, $k \ge 2$ be integers,  $G=(V,E)$ be a graph, and  $c:E \to \mathbb{R}$ be a proper edge colouring of $G$. There exists a proper edge colouring $c'$ of $G \square P_k$ for which 
$$\dim(W_{G\square \b{P_k},c'}^r)\geq\dim(W_{G,c}^r)+(k-1)\dim(W_{G,c}^{r-1})+d_{r-1}+(k-1)(d_0+\ldots+d_{r-2}),$$
where $d_t$ denotes the number of vertices with degree exactly $t$ in $G$.
\end{theorem}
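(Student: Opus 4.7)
The plan is to mimic the proof of Theorem~\ref{thm:cycle} almost verbatim, with the single conceptual modification that the absence of a wraparound path edge at $G_k$ lets us replace the former $W^{r-2}_{G,c}$ summand by an additional copy of $W^{r-1}_{G,c}$. First I would pick distinct real numbers $\alpha_1, \ldots, \alpha_{k-1} \notin c(E)$, and define $c'$ to agree with $c$ on each $G_i$ and to assign colour $\alpha_i$ to every path edge $v_i v_{i+1}$ ($v \in V(G)$, $i \in \{1, \ldots, k-1\}$).

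Next, fixing bases $B^{(r)}$ of $W^r_{G,c}$ and $B^{(r-1)}$ of $W^{r-1}_{G,c}$, I would assemble sets $B_1, \ldots, B_k \subseteq W^r_{G \square P_k, c'}$. For $\phi \in B^{(r)}$ recognized by $\{q_v\}$, let $B_1$ contain $\psi^{(1)}_\phi$ recognized by $p_{v_i} := q_v$ for all $i$; the two conditions of Definition~\ref{def:Wspace} are immediate, and $\psi^{(1)}_\phi$ restricts to $\phi$ on $G_1$. For $\ell \in \{2, \ldots, k-1\}$ and $\phi \in B^{(r-1)}$ recognized by $\{q_v\}$, let $B_\ell$ contain the function built by exactly the same formulas as in Theorem~\ref{thm:cycle}, namely
$$p_{v_\ell} = \frac{x-\alpha_{\ell-1}}{\alpha_\ell - \alpha_{\ell-1}} q_v, \qquad p_{v_{\ell+1}} = \frac{x-\alpha_{\ell+1}}{\alpha_\ell - \alpha_{\ell+1}} q_v,$$
with $p_{v_j} \equiv 0$ for $j \notin \{\ell, \ell+1\}$. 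The one deviation from the cycle proof is for $B_k$: since $v_k$ has only one path-neighbour, namely $v_{k-1}$, I would lift $\phi \in B^{(r-1)}$ recognized by $\{q_v\}$ via $p_{v_k} := (x - \alpha_{k-1}) q_v$ and $p_{v_j} \equiv 0$ for $j < k$. Then $\deg(p_{v_k}) \le r-1$, the only nontrivial path-edge check $p_{v_{k-1}}(\alpha_{k-1}) = p_{v_k}(\alpha_{k-1}) = 0$ is satisfied, and the restriction to $G_k$ is the nonzero rescaling $e \mapsto (c_e - \alpha_{k-1})\phi(e)$.

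Linear independence of $B_1 \cup \ldots \cup B_k$ then follows from the same triangular argument used for the cycle: any $\psi \in B_j$ vanishes on $E(G_1) \cup \ldots \cup E(G_{j-1})$ and is nonzero on at least one edge of $G_j$, so each $B_j$ sits outside the span of $B_{j+1} \cup \ldots \cup B_k$, and restriction to $G_j$ shows each $B_j$ is internally independent. Counting gives $\dim(W^r_{G,c}) + (k-1)\dim(W^{r-1}_{G,c})$ independent vectors. To accommodate low-degree vertices I would transcribe the end of the cycle argument, using that a $P_k$-copy contains $k-1$ edges rather than $k$: for each $v$ with $\deg_G(v) = r-1$, take $\phi_v$ recognized by $p_{v_i}(x) := \prod_{u : vu \in E(G)}(x - c_{vu})$ for all $i$, supported on $v_1 v_2, \ldots, v_{k-1} v_k$; and for each $u$ with $\deg_G(u) < r-1$ and each $i \in \{1, \ldots, k-1\}$, include the single-edge indicator $1_{[e = u_i u_{i+1}]}$, which lies in $W^r_{G \square P_k, c'}$ by the argument of Proposition~\ref{prop:lowdegree} applied at $u_i$ and $u_{i+1}$ (whose $G \square P_k$-degrees are at most $r$). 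These additional $d_{r-1} + (k-1)(d_0 + \ldots + d_{r-2})$ functions vanish on $E(G_1) \cup \ldots \cup E(G_k)$, hence are independent from $B_1 \cup \ldots \cup B_k$, and are mutually independent because their supports either are disjoint or live in different $P_k$-copies.

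The main obstacle I anticipate is really just spotting the correct analogue of $B_k$: the cycle proof used the quadratic factor $(x - \alpha_{k-1})(x - \alpha_k)$ because $v_k$ had two path-neighbours, and one must notice that for the path a single linear factor suffices, yielding $W^{r-1}$ in place of $W^{r-2}$ in the final summand. Once this is in hand, every remaining step is a routine degree and consistency check, and the dimensions add up to exactly $\dim(W^r_{G,c}) + (k-1)\dim(W^{r-1}_{G,c}) + d_{r-1} + (k-1)(d_0 + \ldots + d_{r-2})$.
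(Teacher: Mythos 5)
Your proposal is correct and follows exactly the route the paper intends: it is the cycle argument with $B_k$ built from $W^{r-1}_{G,c}$ via the single linear factor $(x-\alpha_{k-1})q_v$ (the paper's own proof is just the one-line remark that $B_k$ is constructed like $B_2,\ldots,B_{k-1}$, and your construction is the correct instantiation of that remark). All the degree, consistency, and independence checks you supply match the paper's argument, so there is nothing to add.
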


The proof of this theorem is almost identical to the proof of Theorem~\ref{thm:cycle}. The only difference is that $B_k$ is also constructed similar to $B_2,\ldots,B_{k-1}$.

\subsection{Exact bounds for Grids and Tori}
With Theorems~\ref{thm:cycle}~and~\ref{thm:path} in hand, it is easy to prove a recursive formula for the sizes of the smallest percolating sets in tori and grids. We start with the tori.

\begin{theorem}
\label{thm:tori}
Let $d >  0$, $a_1,\ldots,a_d \ge 3$, and $r \ge 0$ be integers. Denoting $G_j = \square_{i=1}^j C_{a_i}$ for $j \in [d]$, we have
\begin{equation}
\label{eq:tori_equality}
m_e(G_d ,r)=m_e(G_{d-1},r) + (a_d-2) m_e(G_{d-1},r-1) +  m_e(G_{d-1},r-2) + 
\left\{ \begin{array}{lcl}
0 & \qquad & r < 2d-1 \\
\prod_{i=1}^{d-1}a_i & & r=2d-1 \\
\prod_{i=1}^{d}a_i & & r\ge 2d \\
\end{array}\right.,
\end{equation}
where  $G_0$ is the graph with a single vertex. 
\end{theorem}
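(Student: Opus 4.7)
The plan is to prove the recursion by induction on $d$, using Proposition~\ref{prop:cycle_up} for the upper bound and combining Theorems~\ref{thm:dim_Lowerbound} and~\ref{thm:cycle} for the lower bound, applied to $G_d = G_{d-1} \square C_{a_d}$. To make the sandwich close, I would strengthen the inductive statement so that it asserts not only the claimed formula for $m_e(G_j,s)$, but also the existence of a proper edge colouring $c_j$ of $G_j$ satisfying $\dim(W^s_{G_j,c_j}) = m_e(G_j,s)$ for every integer $s \ge 0$. The base case $d=0$ is trivial: $G_0$ is a single vertex, has no edges, and both quantities are $0$ for every $s$.

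Before the inductive step, I would evaluate the boundary term $d_{r-1} + a_d(d_0+\ldots+d_{r-2})$ appearing in both Proposition~\ref{prop:cycle_up} and Theorem~\ref{thm:cycle}. Since $G_{d-1}$ is $2(d-1)$-regular with $N_{d-1} := \prod_{i=1}^{d-1} a_i$ vertices, its degree statistics are $d_{2(d-1)} = N_{d-1}$ and $d_t = 0$ for $t \neq 2(d-1)$. A short case analysis then shows that this boundary contribution equals $0$ when $r \le 2d-2$, equals $N_{d-1} = \prod_{i=1}^{d-1}a_i$ when $r = 2d-1$, and equals $a_d N_{d-1} = \prod_{i=1}^d a_i$ when $r \ge 2d$, which is exactly the three-case split in~\eqref{eq:tori_equality}.

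For the inductive step, fix $c := c_{d-1}$ from the hypothesis and let $c'$ be the colouring of $G_d$ supplied by Theorem~\ref{thm:cycle}; crucially, $c'$ depends only on $c$ and a choice of distinct values $\alpha_1,\ldots,\alpha_{a_d}$ outside $c(E)$, not on $r$, so the same $c'$ lower-bounds $\dim(W^s_{G_d,c'})$ for all $s$ simultaneously. The upper bound from Proposition~\ref{prop:cycle_up} and the lower bound obtained from Theorems~\ref{thm:dim_Lowerbound} and~\ref{thm:cycle} differ only in whether the three terms on the right are $m_e(G_{d-1},\cdot)$ or $\dim(W^{\cdot}_{G_{d-1},c})$. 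The strengthened inductive hypothesis identifies these pairwise, so the sandwich forces equality throughout, yielding both~\eqref{eq:tori_equality} at dimension $d$ and $\dim(W^r_{G_d,c'}) = m_e(G_d,r)$ for every $r$. Setting $c_d := c'$ then propagates the strengthened hypothesis forward.

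The main obstacle is really setting up the right inductive statement: the bare numerical formula~\eqref{eq:tori_equality} is not self-propagating, because Theorem~\ref{thm:cycle} needs an actual colouring $c$ attaining $\dim(W^s_{G_{d-1},c}) = m_e(G_{d-1},s)$ in order to turn into equality, and this attainment at level $d-1$ is not conveyed just by the value of $m_e(G_{d-1},s)$. Once the strengthened hypothesis is carried and one notices that the colouring built in Theorem~\ref{thm:cycle} is $r$-independent, the rest is a routine chain of inequalities together with the degree-sequence bookkeeping above.
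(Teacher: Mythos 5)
Your proposal is correct and follows essentially the same route as the paper: induction on $d$ with the strengthened hypothesis that some colouring $c_{d-1}$ attains $\dim(W^{s}_{G_{d-1},c_{d-1}})=m_e(G_{d-1},s)$ for all $s$, the upper bound from Proposition~\ref{prop:cycle_up}, the lower bound from Theorems~\ref{thm:dim_Lowerbound} and~\ref{thm:cycle}, and the observation that $2(d-1)$-regularity of $G_{d-1}$ reduces the degree terms to the three-case expression. Your explicit remarks that the colouring $c'$ from Theorem~\ref{thm:cycle} is independent of $r$ and that the bare numerical recursion is not self-propagating are points the paper leaves implicit, but they do not change the argument.
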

\begin{proof}
Note that $G_{d-1}$ is a $2(d-1)$-regular graph, and thus Proposition~\ref{prop:cycle_up} implies that $m_e(G_d ,r)$ is bounded from above by  the right hand side of \eqref{eq:tori_equality}.  We will show that the other direction  follows from Theorem~\ref{thm:dim_Lowerbound}, Theorem~\ref{thm:cycle} and a simple induction with the base case $m_e(G_0,r')=\dim( W_{G_0,c}^{r'})=0$ for every $r'$. Indeed, assuming the existence of a coloring $c$ of $G_{d-1}$ with $m_e(G_{d-1},r')=\dim(W_{G_{d-1},c}^{r'})$ for every $r'$, one can use  Theorem~\ref{thm:cycle} to obtain a proper edge colouring $c'$ for $G_d$ with 
$$\dim(W_{G_d,c'}^r) \ge m_e(G_{d-1},r) + (a_d-2) m_e(G_{d-1},r-1) +  m_e(G_{d-1},r-2) + 
\left\{ \begin{array}{lcl}
0 & \qquad & r < 2d-1 \\
\prod_{i=1}^{d-1}a_i & & r=2d-1 \\
\prod_{i=1}^{d}a_i & & r\ge 2d \\
\end{array}\right..$$
This together with Theorem~\ref{thm:dim_Lowerbound} completes  the induction step and shows  
$$\dim(W_{G_d,c'}^r) =m_e(G_d ,r) = \mbox{R.H.S. of ($\ref{eq:tori_equality}$)}.$$
\end{proof}

The case of the multidimensional grid can be proven similar to Theorem~\ref{thm:tori}, however, since the \b{product of paths} is not a regular graph, the formula is  more complex. 

\begin{theorem}
\label{thm:grid}
Let $d  > 0$, $a_1,\ldots,a_d \ge 2$, and $r \ge 0$ be integers. Denoting $G_j = \square_{i=1}^j P_{a_i}$, we have
\begin{eqnarray}
\label{eq:path_equality}
m_e(G_d ,r) &=& m_e(G_{d-1},r) + (a_d-1) m_e(G_{d-1},r-1) +  \sum_{S \subseteq [d-1],  |S| =  r-d}  2^{d-1-|S|} \prod_{i \in S} (a_i-2)  \nonumber \\ 
&&+ a_d  \sum_{S \subseteq [d-1],  |S| <  r-d}  2^{d-1-|S|} \prod_{i \in S} (a_i-2) . 
\end{eqnarray}
where  $G_0$ is the graph with a single vertex. 
\end{theorem}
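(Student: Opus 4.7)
My plan is to follow the template of Theorem~\ref{thm:tori} verbatim, substituting the path-based tools for the cycle-based ones. The argument is an induction on $d$, with base case $G_0$ a single vertex and $m_e(G_0, r') = \dim(W_{G_0, c}^{r'}) = 0$ for every $r' \ge 0$, since $G_0$ has no edges.

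The one combinatorial identification needed at each inductive step is the degree sequence of $G_{d-1}$. A vertex $v = (v_1, \ldots, v_{d-1}) \in V(G_{d-1})$ has $\deg_{G_{d-1}}(v) = \sum_{i=1}^{d-1} \deg_{P_{a_i}}(v_i)$, where each summand equals $1$ if $v_i$ is an endpoint of $P_{a_i}$ and $2$ otherwise. Letting $S \subseteq [d-1]$ denote the subset of coordinates at which $v_i$ is an interior vertex, one has $\deg_{G_{d-1}}(v) = (d-1) + |S|$, and the number of vertices with interior set exactly $S$ equals $2^{d-1-|S|} \prod_{i \in S}(a_i - 2)$. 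Writing $d_t$ for the number of degree-$t$ vertices of $G_{d-1}$ then gives
\begin{equation*}
d_{r-1} = \sum_{\substack{S \subseteq [d-1] \\ |S| = r - d}} 2^{d-1-|S|} \prod_{i \in S}(a_i - 2),
\quad
d_0 + \cdots + d_{r-2} = \sum_{\substack{S \subseteq [d-1] \\ |S| < r - d}} 2^{d-1-|S|} \prod_{i \in S}(a_i - 2).
\end{equation*}

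For the upper bound, I would apply Proposition~\ref{prop:path_up} with $G = G_{d-1}$ and $k = a_d$ and substitute the two displayed expressions for $d_{r-1}$ and $d_0 + \cdots + d_{r-2}$; this produces exactly the right-hand side of~\eqref{eq:path_equality}. For the matching lower bound, I would assume inductively the existence of a proper edge colouring $c$ of $G_{d-1}$ with $\dim(W_{G_{d-1}, c}^{r'}) = m_e(G_{d-1}, r')$ for every $r'$, apply Theorem~\ref{thm:path} to lift $c$ to a proper edge colouring $c'$ of $G_d = G_{d-1} \square P_{a_d}$ whose $\dim(W_{G_d, c'}^r)$ is at least the same subset-sum expression, and invoke Theorem~\ref{thm:dim_Lowerbound} to conclude $m_e(G_d, r) \ge \dim(W_{G_d, c'}^r)$. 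Upper and lower bounds then coincide, closing the induction.

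The only novelty relative to Theorem~\ref{thm:tori} is tracking $d_t$ in the non-regular grid; once the interior-set parametrisation is in place, no step of the induction presents a substantive obstacle. The one identity worth checking carefully is that $|S| = r - d$ corresponds to vertices of degree exactly $r - 1$ and $|S| < r - d$ to degrees at most $r - 2$, both of which follow immediately from $\deg = (d-1) + |S|$ and confirm that the subset sums in~\eqref{eq:path_equality} align with the degree-indexed terms coming out of Proposition~\ref{prop:path_up} and Theorem~\ref{thm:path}.
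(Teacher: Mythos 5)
Your proposal follows the paper's proof exactly: the entire content of the paper's argument for Theorem~\ref{thm:grid} is the identification of the two subset sums as $d_{r-1}$ and $d_0+\cdots+d_{r-2}$ for $G_{d-1}$, followed by the same induction as in Theorem~\ref{thm:tori}, using Proposition~\ref{prop:path_up} for the upper bound and Theorems~\ref{thm:path} and~\ref{thm:dim_Lowerbound} for the lower bound. Your degree bookkeeping ($\deg_{G_{d-1}}(v)=(d-1)+|S|$, with $2^{d-1-|S|}\prod_{i\in S}(a_i-2)$ vertices having interior set exactly $S$) is correct and is precisely the one sentence of content in the paper's own proof.

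There is, however, one assertion in your write-up that is not true as stated: the substitution does \emph{not} produce ``exactly the right-hand side of~\eqref{eq:path_equality}.'' Both Proposition~\ref{prop:path_up} and Theorem~\ref{thm:path} carry the coefficient $k-1=a_d-1$ on $d_0+\cdots+d_{r-2}$, so the induction closes on
\begin{equation*}
m_e(G_d,r)=m_e(G_{d-1},r)+(a_d-1)\,m_e(G_{d-1},r-1)+\sum_{\substack{S\subseteq[d-1]\\ |S|=r-d}}2^{d-1-|S|}\prod_{i\in S}(a_i-2)+(a_d-1)\sum_{\substack{S\subseteq[d-1]\\ |S|<r-d}}2^{d-1-|S|}\prod_{i\in S}(a_i-2),
\end{equation*}
with $a_d-1$ rather than $a_d$ in front of the last sum. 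The coefficient $a_d-1$ is the correct one: for a vertex $u$ of $G_{d-1}$ with $\deg(u)\le r-2$, the copy of $P_{a_d}$ above $u$ contributes only $a_d-1$ forced edges. A quick sanity check exposes the mismatch: take $d=1$, $a_1=k$, $r=2$, so $G_1=P_k$ has maximum degree $2<r+1$ and hence $m_e(P_k,2)=|E(P_k)|=k-1$, whereas~\eqref{eq:path_equality} evaluates to $0+0+0+k\cdot 2^{0}=k$. (The same off-by-one appears for $Q_d$ with $r>d$.) This discrepancy is inherited from the paper --- the displayed formula~\eqref{eq:path_equality} is inconsistent with the very propositions it is derived from --- but a complete proof has to notice it rather than declare that the pieces assemble ``exactly.'' As written, your argument (correctly) establishes the identity with $a_d-1$, not the identity displayed in the statement; you should either flag the statement as containing a typo or explain where an extra $\sum_{|S|<r-d}2^{d-1-|S|}\prod_{i\in S}(a_i-2)$ would come from, and no such source exists.
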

The proof of this theorem proceeds similar to the proof of Theorem~\ref{thm:tori}.
\b{Here} $\sum_{S \subseteq [d-1],  |S| =  r-d}  2^{d-1-|S|} \prod_{i \in S} (a_i-2)$ is the number of vertices of degree $r-1$ in $G_{d-1}$, and $\sum_{S \subseteq [d-1],  |S| <  r-d}  2^{d-1-|S|} \prod_{i \in S} (a_i-2)$ is the number of vertices of degree  less than $r-1$. \b{We leave this as an exercise to the interested reader.}

\subsection{\b{ Hypercubes}}
\b{While Theorem~\ref{thm:path} provides a recursive formula for the case of the grids, it is not clear whether there is a closed-form  solution to this recursive formula.  However for the special case of the hypercube, as it is shown in~\cite{MorrisonNoel}, it is possible to obtain a closed-form formula.  Let $d \ge r \ge 0$ be integers, and let $Q_d$ denote the $d$-dimensional hypercube. More formally the vertices of $Q_d$ are vectors $x \in \{0,1\}^d$ and two vertices are adjacent if they differ only in one coordinate. }

\b{Since $Q_d=P_2^{\square d}$, we have the following corollary to Theorem~\ref{thm:grid} that can be verified by a simple induction. This recovers the main result of~\cite{MorrisonNoel}.}

\begin{corollary}
\label{cor}
We have 
$$m_e(Q_d,r)= \sum_{j=1}^{r}{d -j -1 \choose r-j} j 2^{j-1}.$$ 
\end{corollary}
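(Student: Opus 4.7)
The plan is to apply Theorem~\ref{thm:grid} to the case $a_1=\cdots=a_d=2$, so that $G_d=P_2^{\square d}=Q_d$. Because $a_i-2=0$ for every $i$, every product $\prod_{i\in S}(a_i-2)$ in the two vertex-count sums of Theorem~\ref{thm:grid} vanishes unless $S=\emptyset$; and under the hypothesis $r\le d$ the constraint ``$|S|=r-d$'' (resp.\ ``$|S|<r-d$'') admits $S=\emptyset$ only at $r=d$ (resp.\ never). The recursion therefore collapses to
\[
m_e(Q_d,r)\;=\;m_e(Q_{d-1},r)+m_e(Q_{d-1},r-1)\qquad (0<r<d),
\]
with boundary values $m_e(Q_d,0)=0$ and $m_e(Q_d,d)=|E(Q_d)|=d\cdot 2^{d-1}$, the latter because $Q_d$ is $d$-regular, so for $r=d$ no spreading is possible and every edge must be initially infected.

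Next I would verify that $|F_r(Q_d)|$ satisfies the very same recursion. The recursive definition~\eqref{eq:recursive_struct} expresses $F_r(Q_d)$ as the disjoint union of edge sets of two vertex-disjoint copies of $Q_{d-1}$, yielding
\[
|F_r(Q_d)|\;=\;|F_r(Q_{d-1})|+|F_{r-1}(Q_{d-1})|\qquad (0<r<d),
\]
while the explicit definition gives $|F_0(Q_d)|=0$ and $|F_d(Q_d)|=|E(Q_d)|=d\cdot 2^{d-1}$. Since both sequences $(m_e(Q_d,r))$ and $(|F_r(Q_d)|)$ obey identical recursions with identical boundary data, a double induction on $d$ and $r$ yields $m_e(Q_d,r)=|F_r(Q_d)|$ for all $0\le r\le d$.

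To complete the corollary, I would establish the closed form $|F_r(Q_d)|=\sum_{i=1}^{r}\binom{d-i+1}{r-i}\,i\,2^{i-1}$ by a separate induction on $d$. Pascal's identity $\binom{d-i+1}{r-i}=\binom{d-i}{r-i}+\binom{d-i}{r-i-1}$ splits the sum into two pieces which, after reindexing $i\mapsto i-1$ in the second, are recognised as the inductive expressions for $|F_r(Q_{d-1})|$ and $|F_{r-1}(Q_{d-1})|$ respectively. The base cases $r=0$ (empty sum) and $r=d$ reduce to elementary binomial evaluations.

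The main technical point, and the place where careful bookkeeping will be needed, is the $r=d$ boundary in the first recursion: the extra $2^{d-1}$ arising from the unique $S=\emptyset$ term in Theorem~\ref{thm:grid} (which counts all $2^{d-1}$ vertices of $Q_{d-1}$, each of degree exactly $d-1$) must combine with $m_e(Q_{d-1},d)=m_e(Q_{d-1},d-1)=|E(Q_{d-1})|=(d-1)\,2^{d-2}$ (both equal because $Q_{d-1}$ is $(d-1)$-regular, hence neither process can spread) to give the correct total $(d-1)\,2^{d-1}+2^{d-1}=d\cdot 2^{d-1}$. Once this alignment is checked, the rest of the argument is essentially mechanical.
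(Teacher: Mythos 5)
Your overall strategy is exactly the paper's: the corollary is presented there as an immediate specialization of Theorem~\ref{thm:grid} to $a_1=\cdots=a_d=2$, and your explicit collapse of the two vertex-count sums (only $S=\emptyset$ survives, contributing $2^{d-1}$ precisely when $r=d$ and nothing when $r<d$), the matching recursion $|F_r(Q_d)|=|F_r(Q_{d-1})|+|F_{r-1}(Q_{d-1})|$ read off from \eqref{eq:recursive_struct}, and the bookkeeping $m_e(Q_{d-1},d)=m_e(Q_{d-1},d-1)=(d-1)2^{d-2}$ at the $r=d$ boundary are all correct and merely spell out what the paper leaves implicit.

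The one step that does not go through as asserted is the base case $r=d$ of your induction for the closed form. For the formula as printed, $\sum_{i=1}^{d}\binom{d-i+1}{d-i}\,i\,2^{i-1}=\sum_{i=1}^{d}(d-i+1)\,i\,2^{i-1}$, which equals $6$ for $d=2$ and $23$ for $d=3$, not $|E(Q_d)|=d\,2^{d-1}$ (namely $4$ and $12$); likewise at $d=3$, $r=2$ the printed sum gives $3+4=7$ while $|F_2(Q_3)|=m_e(Q_3,2)=5$. Your Pascal-identity inductive step is valid for this family of sums, so the discrepancy sits entirely in the base case: the binomial coefficient must read $\binom{d-i-1}{r-i}$ (as in Morrison and Noel's formula), for which $\binom{d-i-1}{d-i}=0$ when $i<d$ and $\binom{-1}{0}=1$ when $i=d$, so the $r=d$ evaluation does return $d\,2^{d-1}$, and the $d=3$, $r=2$ value becomes $1+4=5$ as required. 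In short, actually performing the ``elementary binomial evaluation'' you defer would have exposed a misprint in the stated formula rather than completed the proof; with the corrected coefficient your argument closes, and everything before that point is sound.
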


\b{It is also possible to explicitly describe a set of percolating edges that achieves this bound. The recursive construction discussed in the previous section is as follows.} Define $F_r(Q_d) \subseteq E(Q_d)$ 
in the following manner. If $r=0$, set $F_r(Q_d):=\emptyset$, and if $d=r$, set $F_r(Q_d):=E(Q_d)$. Otherwise let
\begin{equation}
\label{eq:recursive_struct}
F_r(Q_d) := F_r(Q'_{d-1}) \cup F_{r-1}(Q''_{d-1}),
\end{equation}
where $Q'_{d-1}$ and $Q''_{d-1}$ are the two copies of $Q_{d-1}$ in $Q_d$ induced on the vertices $x=(x_1,\ldots,x_d)$ with $x_d=1$ and  $x_d=0$, respectively.   A simple induction  \b{proves the explicit description}  
$$F_{r}(Q_d) = \left\{(x, \delta_j x) : j \in [n], \sum_{i=1}^{j-1} x_i \ge d-r\right\},$$
where $\delta_j  x$ is the vector that is obtained from $x$ by flipping the value of the $j$-th coordinate. Note 
$$|F_r(Q_d)|=  \sum_{j=1}^{r}{d -j -1 \choose r-j} j 2^{j-1}.$$

\section{Concluding remarks}
The polynomial method as it is used in Theorem~\ref{thm:dim_Lowerbound} is applicable to \b{a more general} setting. Let $H=(V,E)$ be a \emph{hypergraph}, and let $r$ be a nonnegative integer. Suppose that we initially infect a subset $F$ of the \emph{vertices}. We start a process in which, at every step, if there is a hyperedge $S \in E$ that contains at least $r$ infected vertices, then the infection spreads to all the vertices in $S$. \b{To prove a lower-bound for the size of the smallest percolating set for this percolation process we define a vector space similar to  Definition~\ref{def:Wspace}.}
\b{
\begin{definition}
Let $r$ be a positive integer,  $H=(V,E)$ be a hypergraph, and let  $c:V \to \mathbb{R}$ be a vertex colouring of $H$ such that it assigns distinct colors to the vertices in each hyperedge. Let $W_{H,c}^r$ be the vector space of all vectors $(p_e \in \mathbb{R}[x]: e \in E)$, where $p_e$'s are univariate polynomials such that
 \begin{enumerate}
 \item \b{$\deg(p_e)\le \ \min \{r, |e|\} - 1$  for all $e \in E$;}
 \item $p_{e_1}(c_{v})=p_{e_2}(c_{v})$  for every pair $e_1, e_2 \in E$ and for all $v \in e_1 \cap e_2$.
\end{enumerate}
\end{definition}
}

\b{It is not difficult to see that, similar to Theorem~\ref{thm:dim_Lowerbound}, the dimension of the vector space $W_{H,c}^r$ is a lower bound for the size of the smallest percolating set in $H$.}

\b{This is more general than Theorem~\ref{thm:dim_Lowerbound}, as given a graph $G$, to recover Theorem~\ref{thm:dim_Lowerbound},  it suffices to consider the hypergraph $H$ with $V(H):=E(G)$, and hyperedges $S_v=\{uv: uv \in E(G)\}$ for $v \in V(G)$. }

\b{Another well-studied class of extremal percolation problems that falls into this framework is the special case of the $\mathcal{H}$-bootstrap process\b{~\cite{MR2915649}}, when $\mathcal{H}$ is a $k$-uniform hypergraph. In the $\mathcal{H}$-bootstrap process,  we are given a  hypergraph $\mathcal{H}$, and an initial set of infected  vertices.   At each time step, we infect a vertex $u$ if it lies in an edge of $\mathcal{H}$ in which all vertices other than $u$ are already infected. Note that if $\mathcal{H}$ is a $k$-uniform hypergraph, then this process is equivalent to the process described above with  $r=k-1$.  In particular,  the graph bootstrap process of Bollob\'as~\cite{Bollobas68} that was mentioned \b{in the introduction} is a special case of this. Recall that given  graphs $G$ and $H$, and an initial set of infected edges, in the $H$-bootstrap process, at each time step, we infect an edge $e$ if it completes a new infected copy of $H$ in $G$.  This is obviously an instance of the hypergraph $\mathcal{H}$-bootstrap process for the $|E(H)|$-uniform hypergraph  $\mathcal{H}$ with $V(\mathcal{H}):=E(G)$, and hyperedges  that correspond to the copies of $H$ in $G$. }

\section*{Acknowledgement}
We wish to thank Jonathan Noel for bringing our attention on the reference~\cite{imbibition} \b{and the anonymous reviewer for pointing out the earlier application of polynomial method for bootstrap percolation in~\cite{Balister2018LineP}}. The second author wishes to thank Noga Alon for valuable comments and discussions.

\bibliographystyle{alpha}
\bibliography{percolation}

\end{document}